\newtheorem{thm}{Theorem}[section]
\newtheorem{lem}[thm]{Lemma}
\newtheorem{prop}[thm]{Proposition} 
\newtheorem{cor}[thm]{Corollary}
\theoremstyle{definition}
\newtheorem*{rem}{Remark}
\newtheorem*{nota}{Notation} 
\numberwithin{equation}{section}
\newcommand{\qq}{\qquad}
\newcommand{\cxr}{C_0^{+}(X,\mathbb{R})}
\newcommand{\cyr}{C_0^{+}(Y,\mathbb{R})}
\newcommand{\N}{\mathbb N} 
\newcommand{\R}{\mathbb R} 
\newcommand{\fr}{\frac} 
\newcommand{\V}[1]{\left\Vert #1 \right\Vert}
\def\Gam{\Gamma}
\newcommand{\set}[1]{\left\{ #1 \right\}}
\newcommand{\PA}[1]{P_{#1}}
\newcommand{\PB}[1]{Q_{#1}}
\newcommand{\MX}[1]{M_{#1}}
\newcommand{\MY}[1]{N_{#1}}
\newcommand{\pcx}{C_0^+(X,\R)}
\newcommand{\pcy}{C_0^+(Y,\R)}
\newcommand{\pcl}{C_0^+(L,\R)}
\newcommand{\cx}{C_0(X,\R)}
\newcommand{\cy}{C_0(Y,\R)}
\newcommand{\cl}{C_0(L,\R)}
\newcommand{\Vinf}[1]{\V{#1}_\infty}
\newcommand{\zx}{\mathbf{0}_X}
\newcommand{\zy}{\mathbf{0}_Y}
\newcommand{\zl}{\mathbf{0}_L}
\newcommand{\TI}{S}
\newcommand{\TT}{\widetilde{T}} 
\newcommand{\al}{\alpha}
\begin{document}


\title[Phase-isometries between positive cones]
{Phase-isometries between the positive cones of the Banach space
of continuous real-valued functions}


\author[D.~Hirota]{Daisuke Hirota}
\address[Daisuke Hirota]{Graduate School of Science and Technology,
Niigata University, Niigata 950-2181, Japan}
\email{hirota@m.sc.niigata-u.ac.jp}

\author[I.~Matsuzaki]{Izuho Matsuzaki}
\address[Izuho Matsuzaki]{Graduate School of Science and Technology,
Niigata University, Niigata 950-2181, Japan}
\email{matsuzaki@m.sc.niigata-u.ac.jp}

\author[T.~Miura]{Takeshi Miura}
\address[Takeshi Miura]{Department of Mathematics, Faculty of Science, Niigata University, 
Niigata 950-2181, Japan}
\email{miura@math.sc.niigata-u.ac.jp}

\subjclass[2020]{46B04, 46B20, 46J10} 
\keywords{isometry, phase-isometry, positive cone}


\begin{abstract} 
For a locally compact Hausdorff space $L$,
we denote by $\cl$ the Banach space of all continuous real-valued 
functions on $L$ vanishing at infinity equipped with the supremum norm. 
We prove that 
every surjective phase-isometry $T\colon\pcx\to \pcy$ between the positive cones of $\cx$ and $\cy$
is a composition operator induced by a homeomorphism
between $X$ and $Y$.
Furthermore, we show that any surjective phase-isometry
$T\colon\pcx\to\pcy$ extends to a surjective linear isometry from $\cx$ onto $\cy$.
\end{abstract}

\maketitle


\section{Introduction and main theorem}

The celebrated Wigner's unitary-antiunitary theorem
\cite{wig} characterized surjective, not necessarily
linear, maps $T\colon H_1\to H_2$ between two complex
inner product spaces $H_1$ and $H_2$ with
the property that 
\begin{equation}\label{1-1}
|\langle T(x),T(y)\rangle|
=|\langle x,y\rangle|\qquad(x,y\in H_1).
\end{equation}
There are several proofs of the theorem;
see, for example, \cite{Bargmann, geh, gyo, lomont}.
R\"{a}tz \cite{raz} considered a surjective map
$T\colon H_1\to H_2$ between two
real inner product spaces $H_1$ and $H_2$
that satisfies \eqref{1-1}.
Then it was proved in \cite[Corollary~8]{raz} that
there exist a function
$\varphi\colon H_1\to\set{-1,1}$ and a surjective
linear isometry $U\colon H_1\to H_2$
such that $T(x)=\varphi(x)U(x)$ for all $x\in H_1$.

Maksa and P\'ales \cite{mak}
gave some equivalent conditions with
\eqref{1-1}
and proved that the condition \eqref{1-1}
is equivalent to the following:
\begin{equation}\label{phase}
\set{\|T(x)+T(y)\|, \|T(x)-T(y)\|}
=\set{\|x+y\|,\|x-y\|}\qquad(x,y\in H_1).
\end{equation}
One can define maps $T\colon H_1\to H_2$
that satisfy \eqref{phase},
provided that $H_1$ and $H_2$ are both real normed spaces:
Such maps are said to be \textit{phase-isometries}.
Surjective phase-isometries are characterized
for some concrete normed spaces
\cite{XuTa, jia, LiTa,TaXu, ZeHu}.
Ili\v{s}evi\'{c}, Omladi\v{c} and Turn\v{s}ek 
\cite[Theorem~4.2]{ili} gave the characterization
of surjective phase-isometries between
real normed spaces.
Tan and Gao \cite{tan3} and Tan, Zhang and Huang \cite{TZH}
considered surjective phase-isometries between the unit spheres
of certain normed spaces.

Sun, Sun and Dai \cite{sun}
considered phase-isometries on the positive cone,
$C^+(L,\R)$, of all non-negative functions
of the Banach space $C(L,\R)$ of all continuous 
real-valued functions on a compact Hausdorff space $L$.
In \cite[Corollary~2.5]{sun},
they showed that every surjective phase-isometry
between two positive cones 
$C^{+}(K,\mathbb{R})$ and $C^{+}(L,\R)$ 
is the restriction of a surjective linear isometry
between $C(K,\R)$ and $C(L,\R)$
under an additional assumption on $K$.
Similar result was obtained in \cite{sun2} for the positive cone
of the sequence space $c_0$.

Motivated by the above results, it is natural to discuss
phase-isometries between the positive cones, $\pcl$,
of the Banach space $\cl$ of all continuous real-valued
functions on a locally compact Hausdorff space $L$
vanishing at infinity equipped with the supremum norm
$\Vinf{\cdot}$.
The purpose of this paper is to characterize surjective
phase-isometries between two positive cones $\pcx$ and $\pcy$,
which generalizes \cite[Theorem~2.7]{sun}.
The main result of this paper is as follows.

\begin{thm}\label{thm1}
Let $T\colon\pcx\to\pcy$ be a surjective phase-isometry.
Then there exists a homeomorphism $\phi\colon Y\to X$
such that
\[
T(f)(y)=f(\phi(y))
\]
holds for all $f\in\pcx$ and $y\in Y$.
\end{thm}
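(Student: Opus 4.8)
The plan is to strip the phase-isometry condition down to two ordinary norm identities, use them to reconstruct the topology of $X$ inside that of $Y$, and then read off the composition-operator form.

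First I would exploit positivity. For $f,g\in\pcx$ one has $(f+g)(x)=f(x)+g(x)\ge|f(x)-g(x)|$ at every $x$, so $\Vinf{f+g}\ge\Vinf{f-g}$, and the same inequality holds for $T(f),T(g)$ because these too are non-negative. Since an equality of two-element sets $\{a,b\}=\{c,d\}$ with $a\ge b$ and $c\ge d$ forces $a=c$ and $b=d$, the defining identity of a phase-isometry separates into
\[
\Vinf{T(f)+T(g)}=\Vinf{f+g}\qquad\text{and}\qquad\Vinf{T(f)-T(g)}=\Vinf{f-g}\qquad(f,g\in\pcx).
\]
Taking $g=\zx$ gives $\Vinf{T(f)}=\Vinf{f}$, and $f=g=\zx$ gives $T(\zx)=\zy$; thus $T$ is in particular a surjective, norm-preserving isometry of the supremum metric, and (since a surjective phase-isometry is a bijection) $T^{-1}$ is again a surjective phase-isometry with the same properties.

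Next I would extract topology from the first identity. For $\zx\ne f\in\pcx$ the function $f$ vanishes at infinity, so its peak set $P(f):=\{x\in X:f(x)=\Vinf f\}$ is non-empty and compact; moreover $\Vinf{f+g}=\Vinf f+\Vinf g$ holds exactly when $P(f)\cap P(g)\ne\emptyset$ (the non-obvious direction because $f+g$ attains its maximum). Hence $T$ and $T^{-1}$ preserve the relation ``the peak sets meet.'' Fixing $y\in Y$, the family $\mathcal F_y=\{g\in\pcy:y\in P(g)\}$ is closed under addition and $P(g_1)\cap\cdots\cap P(g_n)=P(g_1+\cdots+g_n)\ne\emptyset$, so $\{P(g):g\in\mathcal F_y\}$ has the finite intersection property; transporting through $T^{-1}$ and invoking compactness should produce a point of $X$ common to all $P(T^{-1}g)$, $g\in\mathcal F_y$. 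After proving this point is unique one sets $\phi(y)$ equal to it, and running the symmetric construction with $T^{-1}$ yields a bijection $\phi\colon Y\to X$ characterised by $y\in P(T(f))\iff\phi(y)\in P(f)$; continuity of $\phi$ and of $\phi^{-1}$ is then read from the behaviour of peak sets under $T$ and $T^{-1}$.

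The main obstacle will be precisely this reconstruction of points and the verification that $\phi$ is a homeomorphism. In a general locally compact Hausdorff space, points need not be $G_\delta$ sets, so there may be no ``sharply'' peaking functions, and one cannot simply separate points of $X$ by such functions as in the metrizable or scattered-compact setting of \cite{sun}; the norm identities themselves, not merely the peaking relation, together with careful compactness arguments, must be used to make the common point exist, to see it is unique, and to obtain continuity. A related point is that positive homogeneity $T(\lambda f)=\lambda T(f)$ for $\lambda>0$ does not follow formally from the two identities applied to the single pair $f,\lambda f$ (one can construct non-linear ``rays'' satisfying those two relations in isolation); I expect it to emerge only once $\phi$ is available, by comparing $T(\lambda f)$ and $T(f)$ at the recovered points. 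With $\phi$ a homeomorphism and $T$ positively homogeneous in hand, the value $T(f)(y)$ can finally be pinned down from the quantities $\Vinf{T(f)\pm T(g)}$ as $g$ ranges over members of $\pcx$ with $\phi(y)\in P(g)$ (whose $T$-images then peak at $y$), and matching these through $\phi$ forces $T(f)(y)=f(\phi(y))$. This identity also exhibits $T$ as the restriction of the surjective linear isometry $h\mapsto h\circ\phi$ of $\cx$ onto $\cy$, which is the additional statement of the abstract.
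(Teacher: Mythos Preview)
Your outline is correct and runs along the same peak-set route as the paper, with one genuine shortcut. By ordering $\Vinf{f+g}\ge\Vinf{f-g}$ on the positive cone you split the phase-isometry condition into \emph{both} identities $\Vinf{T(f)\pm T(g)}=\Vinf{f\pm g}$; taking $f=g=\zx$ in the plus-identity then gives $T(\zx)=\zy$ and hence norm-preservation in one line. The paper extracts only the minus-identity (Proposition~\ref{prop1.1}) and must then argue through Lemmas~\ref{lem2.1}--\ref{lem2.3} to reach $T(\zx)=\zy$, because it is in fact proving the stronger Theorem~\ref{thm2} for arbitrary surjective isometries, where the plus-identity is unavailable; your shortcut is specific to phase-isometries and forfeits that generality. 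From there the two arguments coincide: the finite-intersection step for $\bigcap_i P(T^{-1}g_i)$ is obtained exactly as in Lemmas~\ref{lem2.5}--\ref{lem2.7} by looking at a peak point of $T^{-1}(g_1+\cdots+g_n)$ and using the minus-identity, and the back-and-forth for uniqueness and bijectivity is Lemma~\ref{lem2.8}. Be aware that your final step, ``pinning down $T(f)(y)$ from $\Vinf{T(f)\pm T(g)}$ with $\phi(y)\in P(g)$,'' does not go through with an arbitrary such $g$: one needs, for a given $u\in\pcy$ and $y$, a function $v$ peaking at $y$ such that $u+v$ \emph{also} peaks at $y$, which is precisely the paper's Lemma~\ref{lem2.9} and requires a nontrivial construction.
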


\begin{cor}\label{cor1}
If $T\colon\pcx\to\pcy$ is a surjective phase-isometry,
then there exists a unique surjective linear isometry
$\TT\colon\cx\to\cy$ such that $\TT=T$ on $\pcx$.
\end{cor}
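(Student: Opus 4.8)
\emph{(A proof proposal for Corollary~\ref{cor1}.)}

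The plan is to read the required linear isometry directly off the homeomorphism produced by Theorem~\ref{thm1}, so this corollary is essentially a formality once that theorem is available. Given the surjective phase-isometry $T\colon\pcx\to\pcy$, Theorem~\ref{thm1} supplies a homeomorphism $\phi\colon Y\to X$ with $T(f)(y)=f(\phi(y))$ for all $f\in\pcx$ and $y\in Y$. I would then simply \emph{define} $\TT\colon\cx\to\cy$ by $\TT(g)=g\circ\phi$, the composition operator induced by $\phi$. Since $\phi$ is a homeomorphism between locally compact Hausdorff spaces it is proper --- preimages of compact sets are compact because $\phi^{-1}$ is continuous --- so $g\circ\phi$ vanishes at infinity whenever $g$ does, and hence $\TT$ maps $\cx$ into $\cy$. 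It is manifestly linear, and $\Vinf{\TT(g)}=\sup_{y\in Y}|g(\phi(y))|=\sup_{x\in X}|g(x)|=\Vinf{g}$ because $\phi$ is surjective, so $\TT$ is an isometry. Surjectivity is equally immediate: for $h\in\cy$ we have $h\circ\phi^{-1}\in\cx$ (again using that $\phi^{-1}$ is continuous and proper) and $\TT(h\circ\phi^{-1})=h$.

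Next I would verify that $\TT$ extends $T$: for $f\in\pcx$ and $y\in Y$ the displayed formula of Theorem~\ref{thm1} gives $\TT(f)(y)=f(\phi(y))=T(f)(y)$, so $\TT=T$ on $\pcx$. For uniqueness the key observation is that $\pcx$ linearly spans $\cx$, since every $f\in\cx$ can be written as $f=f^{+}-f^{-}$ with $f^{\pm}=\max(\pm f,0)\in\pcx$. Thus if $\TT_1,\TT_2\colon\cx\to\cy$ are both linear and both agree with $T$ on $\pcx$, then $\TT_1(f)=\TT_1(f^{+})-\TT_1(f^{-})=T(f^{+})-T(f^{-})=\TT_2(f^{+})-\TT_2(f^{-})=\TT_2(f)$ for every $f$, so $\TT_1=\TT_2$; note that no isometry or even continuity hypothesis is needed for this part.

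There is no genuine obstacle once Theorem~\ref{thm1} is granted; the only points requiring a moment's thought are (i) that a homeomorphism of locally compact Hausdorff spaces is automatically proper, which is what lets composition with $\phi$ and $\phi^{-1}$ preserve vanishing at infinity, and (ii) the elementary identity $\cx=\pcx-\pcx$, which simultaneously produces the linear extension on all of $\cx$ and forces its uniqueness. All the real work lies in the identification of $T$ with a composition operator, i.e.\ in Theorem~\ref{thm1} itself.
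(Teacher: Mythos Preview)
Your proof is correct and follows the same core idea as the paper: define $\TT$ as the composition operator $g\mapsto g\circ\phi$ induced by the homeomorphism from Theorem~\ref{thm1}. Two of your technical steps are, however, more elementary than the paper's. For surjectivity, the paper (via Corollary~\ref{cor3.2}) decomposes $u\in\cy$ as $u^{+}-u^{-}$ and uses the surjectivity of $T$ on the positive cone to find preimages of $u^{\pm}$, whereas you simply use the inverse homeomorphism $\phi^{-1}$ directly; your route is shorter and makes clearer that surjectivity of $\TT$ comes purely from $\phi$ being a bijection. For uniqueness, the paper invokes the Banach--Stone theorem to represent any competing surjective linear isometry and then matches the data, while your argument via $\cx=\pcx-\pcx$ is entirely elementary and in fact establishes the stronger statement that any \emph{linear} map agreeing with $T$ on $\pcx$ is already determined, with no isometry or surjectivity hypothesis needed.
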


\section{Preliminaries and auxiliary lemmata}

We denote by $\cl$ the real Banach space of all continuous real-valued functions on a locally compact Hausdorff space $L$
vanishing at infinity with respect to the supremum norm
$\Vinf{h}=\sup_{x\in L}|h(x)|$ for $h\in\cl$.
The positive cone, $\pcl$, is the set of all non-negative functions
of $\cl$, that is,
\[
\pcl=\set{h\in\cl:h\geq0}.
\]
Denote by $\zl$ the zero function in $\pcl$.

First, we shall show that every phase-isometry
between $\pcl$ is an isometry, which was proved
in \cite[Theorem~2.3]{sun} for a compact Hausdorff space
$L$ with an additional assumption on $L$.

\begin{prop}\label{prop1.1}
Let $X$ and $Y$ be locally compact Hausdorff spaces.
If $T\colon\pcx\to\pcy$ is a phase-isometry,
then  it is an isometry.
\end{prop}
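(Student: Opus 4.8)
The plan is to exploit the order structure of the positive cone. The key observation is that for any two non-negative continuous functions $u,v$ vanishing at infinity on a locally compact Hausdorff space, one has $\Vinf{u-v}\le\Vinf{u+v}$: indeed, pointwise $|u-v|\le\max\{u,v\}\le u+v$, and taking the supremum gives the inequality. This is exactly the feature special to the positive cone, and it lets one break the symmetry between the two numbers in the set appearing in the definition of a phase-isometry.

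So, given $f,g\in\pcx$, I would first record the two inequalities $\Vinf{f-g}\le\Vinf{f+g}$ (apply the observation with $u=f$, $v=g$) and $\Vinf{T(f)-T(g)}\le\Vinf{T(f)+T(g)}$ (apply it with $u=T(f)$, $v=T(g)$, which lie in $\pcy$). The phase-isometry hypothesis gives $\{\Vinf{T(f)+T(g)},\Vinf{T(f)-T(g)}\}=\{\Vinf{f+g},\Vinf{f-g}\}$, and two two-element sets coincide exactly when their maxima agree and their minima agree. By the two inequalities just noted, the maximum of the left-hand set is $\Vinf{T(f)+T(g)}$ and that of the right-hand set is $\Vinf{f+g}$, while the minima are $\Vinf{T(f)-T(g)}$ and $\Vinf{f-g}$. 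Hence $\Vinf{T(f)-T(g)}=\Vinf{f-g}$ for all $f,g\in\pcx$, which is precisely the statement that $T$ is an isometry; as a by-product one also gets $\Vinf{T(f)+T(g)}=\Vinf{f+g}$.

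There is essentially no obstacle here: the whole argument rests on the elementary pointwise bound $|u-v|\le u+v$ for non-negative $u,v$, and surjectivity of $T$ is not even needed for this proposition. It is worth noting why the same shortcut is unavailable in a general normed space: there is no order relation forcing one of $\Vinf{x+y}$, $\Vinf{x-y}$ to dominate the other, so the two sets cannot be matched term by term, which is exactly why the surjective but non-linear case of Wigner-type theorems is genuinely more delicate off the cone.
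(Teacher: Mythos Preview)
Your argument is correct and essentially identical to the paper's own proof: both derive the pointwise inequality $|f-g|\le f+g$ from non-negativity to obtain $\Vinf{f-g}\le\Vinf{f+g}$ (and the analogous inequality for $T(f),T(g)$), and then identify $\Vinf{f-g}$ and $\Vinf{T(f)-T(g)}$ as the respective minima of the two coinciding sets. Your added remarks about surjectivity being unnecessary and the failure of this shortcut off the cone are accurate and pertinent.
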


\begin{proof}
Fix $f,g\in\pcx$ and $x\in X$ arbitrarily.
We have
\[
|f(x)-g(x)|
\leq|f(x)+g(x)|
\leq\Vinf{f+g},
\]
since $f(x),g(x)\geq0$.
This implies that $\Vinf{f-g}\leq\Vinf{f+g}$. By the same reasoning, we have $\Vinf{T(f)-T(g)}\leq\Vinf{T(f)+T(g)}$. 
Thus, we obtain

\begin{align*}
\Vinf{f-g}
&=\min\set{\Vinf{f-g}, \Vinf{f+g}}\\
&=\min\set{\Vinf{T(f)-T(g)}, 
\Vinf{T(f)+T(g)}}=\Vinf{T(f)-T(g)},
\end{align*}
since $T$ is a phase-isometry. This shows that $T$ is an isometry.
\end{proof}

Once we characterize isometries between the positive cones
$\pcl$,
we will get the structure of phase-isometries between them
by Proposition~\ref{prop1.1}.
In this section, we will analyze surjective isometries
$T\colon\pcx\to\pcy$.
We will prove the following result.

\begin{thm}\label{thm2}
Let $T\colon\pcx\to\pcy$ be a surjective isometry.
Then there exists a homeomorphism $\phi\colon Y\to X$
such that
\[
T(f)(y)=f(\phi(y))
\]
holds for all $f\in\pcx$ and $y\in Y$.
\end{thm}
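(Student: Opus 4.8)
The plan is to reduce the theorem to a reconstruction of the space $X$ from the metric structure of $\pcx$: once $T$ is known to fix the apex of the cone and to preserve the norm, the points of $X$ and $Y$ can be read off from the metrics of $\pcx$ and $\pcy$, the map $T$ then induces the homeomorphism $\phi$, and the composition formula is obtained first on a determining subfamily of $\pcx$ and then on all of it. First I would show $T(\zx)=\zy$. The point is that $\zx$ is the unique $p\in\pcx$ satisfying
\[
\Vinf{f-g}\le\max\set{\Vinf{f-p},\,\Vinf{g-p}}\qquad(f,g\in\pcx).
\]
For $p=\zx$ this follows from the same nonnegativity trick used in the proof of Proposition~\ref{prop1.1}, namely $|f(x)-g(x)|\le\max\set{f(x),g(x)}$ for every $x$; conversely, choosing $f=2p$ and $g=\zx$ in the displayed inequality forces $2\Vinf p\le\Vinf p$, hence $p=\zx$. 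This property transfers along a surjective isometry, so $T(\zx)=\zy$, and then $\Vinf{T(f)}=\Vinf{T(f)-T(\zx)}=\Vinf{f-\zx}=\Vinf f$ for every $f\in\pcx$. (Note that Mankiewicz's theorem on isometries of convex bodies cannot be applied here: when $X$ is non-compact the cone $\pcx$ has empty interior in $\cx$, which is exactly why a direct argument is needed.)

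Next, normalising to the unit sphere $S_X=\set{f\in\pcx:\Vinf f=1}$ --- on which $T$ restricts to a surjective isometry onto $S_Y$ --- I would reconstruct $X$ from $(S_X,\Vinf{\cdot-\cdot})$. For $f\in S_X$ put $M(f)=f^{-1}(1)$, a nonempty compact subset of $X$. The key computation I expect to rely on is that, for $f,g\in S_X$,
\[
\inf\set{\Vinf{f-g'}:g'\in S_X,\ M(g')=M(g)}=1-\inf_{M(g)}f,
\]
the lower bound being immediate from evaluating at points of $M(g)$ and the upper bound coming from altering $f$ only near $M(g)$ and on $f^{-1}(1)\setminus M(g)$. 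Once the families $\set{g':M(g')=M(g)}$ are themselves described metrically, this renders the relation ``$M(g)\subseteq M(f)$'' --- equivalently $\inf_{M(g)}f=1$ --- metrically visible, so the inclusion poset of the sets $M(f)$ is recovered; the points of $X$ then appear as the maximal families $\set{g\in S_X:x\in M(g)}$, and each value $f(x)$ is recovered as $\sup\set{\inf_{M(g)}f:g\in S_X,\ x\in M(g)}$, using that one can choose $g\in S_X$ with $x\in M(g)$ and $M(g)$ inside any prescribed neighbourhood of $x$. Applying this to $T$ and to $T^{-1}$ and using surjectivity yields a bijection $\phi\colon Y\to X$ with $T(f)(y)=f(\phi(y))$ for $f\in S_X$; that $\phi$ and $\phi^{-1}$ are continuous follows because $f_\alpha\to f$ in $S_X$ drives the level sets $M(f_\alpha)$ into every neighbourhood of $M(f)$, so the fact that the topology of $X$ is encoded in $(S_X,\Vinf{\cdot-\cdot})$ together with $T,T^{-1}$ being isometries gives continuity of $\phi$. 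Finally the identity $T(f)(y)=f(\phi(y))$ extends from $S_X$ to all of $\pcx$ via the norm identity $\Vinf{T(f)}=\Vinf f$ together with a uniform approximation of an arbitrary $f$ by scalar multiples of unit-sphere functions agreeing with $f$ where $f$ is large, and the continuity of $T$.

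The whole difficulty is concentrated in the reconstruction step. The obstacle is that $\cx$ is very far from strictly convex, so metric segments and midpoints are wildly non-unique and the order and linear structure on $\pcx$ leaves essentially no footprint in the metric: the operations $f+g$, $f\wedge g$, $f\vee g$ that would make the reconstruction routine are invisible to it. Hence each reconstruction move, above all expressing ``$M(g)\subseteq M(f)$'' and ``$g'$ has the same unit level set as $g$'' purely in terms of $\Vinf{\cdot-\cdot}$, must be engineered and verified by hand, and one must take care that the resulting chain of definitions is not circular. Non-compactness makes matters worse: there are no constant functions, functions ``peaking at infinity'' must be excluded at every step, the covering and approximation arguments have to respect vanishing at infinity, and since a point of $X$ need not be a $G_\delta$ one cannot assume that functions peaking at a single point exist, so one must work throughout with the level sets $f^{-1}(\Vinf f)$ and their finite-intersection behaviour rather than with genuine peak functions.
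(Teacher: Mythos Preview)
Your metric characterisation of $\zx$ is correct and slicker than the paper's route through Lemmata~\ref{lem2.1}--\ref{lem2.3}: the paper instead bounds $\Vinf{u}\le\max\{\Vinf{\TI(u)},\Vinf{s_0}\}$, bootstraps to $\Vinf{\TI(u)}=\Vinf{u}$ for large $u$, and then rules out $t_0\ne\zy$ by a Urysohn construction. Your two-line argument could replace that stretch.

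The remainder of your programme, however, has two gaps that are not merely technical. First, the reconstruction hinges on expressing ``$M(g')=M(g)$'' purely in terms of $\Vinf{\cdot-\cdot}$, and you rightly flag this as the crux without resolving it; the natural attempts (via your infimum formula, or via distances to a third function) are circular, and there is no evident non-circular description. Second, even granting $T(f)(y)=f(\phi(y))$ for $f$ in your $S_X$, the extension to all of $\pcx$ does not follow from continuity and approximation: knowing $T$ on $S_X\cup\{\zx\}$ does \emph{not} determine $T$ elsewhere via the isometry property, because for $\Vinf{f}>1$ the distances $\Vinf{f-g}$ with $g\in S_X$ are dominated by $\Vinf{f}$ and carry almost no pointwise information about $f$. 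You would at minimum have to rerun the reconstruction on every sphere of radius $r$ and then prove the resulting homeomorphisms $\phi_r$ all coincide.

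The paper avoids both obstacles by exploiting the \emph{additive} structure of the cone, which your sphere-only viewpoint discards. Two devices do the work. For Lemma~\ref{lem2.6}: if $u_0,\dots,u_n\in\PB{y_0}$ then $v=\sum_k u_k\in\PB{y_0}$, and evaluating the isometry identity $\Vinf{\TI(v)-\TI(u_m)}=\Vinf{v-u_m}$ at a peak point $x_1$ of $\TI(v)$ forces $x_1\in\MX{\TI(u_m)}$ for every $m$; the finite intersection property then yields a single $x_0$ with $\TI(\PB{y_0})=\PA{x_0}$, defining $\phi$. For Lemma~\ref{lem2.9}: given \emph{arbitrary} $u\in\pcy$ and $y_0\in Y$, a weighted series of Urysohn bumps vanishing on the level sets of $|u-u(y_0)|$ produces $v\in\PB{y_0}$ with $u+v\in\PB{y_0}$. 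Applying this with $u=T(f)$, one gets $\TI(T(f)+v)\in\PA{\phi(y)}$, and a single isometry inequality at $\phi(y)$ gives $T(f)(y)\le f(\phi(y))$; the reverse inequality is symmetric. No metric description of ``$M(g')=M(g)$'' and no separate extension step are needed.
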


In the rest of this section, we assume that $T\colon\pcx\to\pcy$
is a surjective isometry.
Then we see that $T^{-1}\colon\pcy\to\pcx$ is a well-defined
surjective isometry:
For simplicity of notation, we shall write $T^{-1}=S$,
and then $\TI\colon\pcy\to\pcx$ is a surjective isometry.
We set
\[
t_0=T(\zx)
\qq\mbox{and}\qq
s_0=\TI(\zy),
\]
and then $t_0\in\pcy$ and $s_0\in\pcx$ by definition. Immediately, we have
\begin{equation}\label{norm1}
\Vinf{T(f)-t_0}=\Vinf{f},
\qq
\Vinf{\TI(u)-s_0}=\Vinf{u}
\end{equation}
for all $f\in\pcx$ and $u\in\pcy$.
We need some lemmata to prove that $t_0=\zy$.

\begin{lem}\label{lem2.1}
If $u\in\pcy$ satisfies $\Vinf{s_0}<\Vinf{u}$,
then $\Vinf{u}\leq\Vinf{\TI(u)}$.
\end{lem}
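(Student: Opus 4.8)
The plan is to argue by contradiction and exploit the defining isometry relation $\Vinf{\TI(u)-s_0}=\Vinf{u}$ together with the elementary inequalities available in a positive cone. Suppose, contrary to the conclusion, that $\Vinf{\TI(u)}<\Vinf{u}$ while $\Vinf{s_0}<\Vinf{u}$. Write $v=\TI(u)\in\pcx$ and $r=\Vinf{u}$, so that $\Vinf{v}<r$ and $\Vinf{s_0}<r$, yet $\Vinf{v-s_0}=r$ by \eqref{norm1}. The first step is to observe that for nonnegative functions $p,q$ one has the pointwise bound $|p(x)-q(x)|\le\max\{p(x),q(x)\}\le\max\{\Vinf{p},\Vinf{q}\}$, and hence $\Vinf{p-q}\le\max\{\Vinf{p},\Vinf{q}\}$. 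Applying this with $p=v$ and $q=s_0$ gives $\Vinf{v-s_0}\le\max\{\Vinf{v},\Vinf{s_0}\}<r$, which directly contradicts $\Vinf{v-s_0}=r$.

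Thus the heart of the matter is really just the pointwise estimate $|a-b|\le\max\{a,b\}$ valid for $a,b\ge0$, which is the positive-cone analogue of the trivial bound and is the same mechanism already used in the proof of Proposition~\ref{prop1.1}. I would state this observation as the opening sentence of the proof, then take suprema, then invoke \eqref{norm1} to convert $\Vinf{\TI(u)-s_0}$ into $\Vinf{u}$ and reach the contradiction.

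I do not anticipate a genuine obstacle here: the lemma is a soft consequence of the isometry identity \eqref{norm1} and the order structure of $\pcy$ and $\pcx$. The only point requiring a little care is making sure the inequality chain is applied in the right space — the functions $\TI(u)$ and $s_0$ live in $\pcx$, so the pointwise estimate is taken over $x\in X$ — and that the strict inequality $\Vinf{s_0}<\Vinf{u}$ is used to guarantee $\max\{\Vinf{\TI(u)},\Vinf{s_0}\}<\Vinf{u}$ under the contradiction hypothesis. Everything else is routine.
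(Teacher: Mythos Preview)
Your proposal is correct and is essentially the paper's own argument: both rest on the pointwise bound $|p(x)-q(x)|\le\max\{p(x),q(x)\}$ for nonnegative $p,q$, pass to suprema, and combine with \eqref{norm1} to force $\Vinf{u}\le\max\{\Vinf{\TI(u)},\Vinf{s_0}\}$. The only cosmetic difference is that the paper phrases the final step directly (ruling out $\Vinf{s_0}$ as the maximum) rather than assuming $\Vinf{\TI(u)}<\Vinf{u}$ at the outset, but the content is identical.
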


\begin{proof}
Fix an arbitrary $u\in\pcy$ that satisfies
$\Vinf{s_0}<\Vinf{u}$.
For each $x\in X$, we have
\[
|\TI(u)(x)-s_0(x)|
\leq\max\set{\TI(u)(x),s_0(x)}
\leq\max\set{\Vinf{\TI(u)},\Vinf{s_0}},
\]
since $\TI(u)(x),s_0(x)\geq0$.
We derive from the above inequalities with \eqref{norm1} that
\begin{equation}\label{lem2.1.1}
\Vinf{u}=\Vinf{\TI(u)-s_0}
\leq
\max\set{\Vinf{\TI(u)},\Vinf{s_0}}.
\end{equation}
If we suppose that
$\max\set{\Vinf{\TI(u)},\Vinf{s_0}}=\Vinf{s_0}$,
then we would deduce from \eqref{lem2.1.1} that
$\Vinf{u}\leq\Vinf{s_0}$,
which contradicts the assumption that
$\Vinf{s_0}<\Vinf{u}$.
Therefore,
$\max\set{\Vinf{\TI(u)},\Vinf{s_0}}=\Vinf{\TI(u)}$.
We conclude that $\Vinf{u}\leq\Vinf{\TI(u)}$
by \eqref{lem2.1.1}.
\end{proof}

\begin{lem}\label{lem2.2}
The identity $\Vinf{\TI(u)}=\Vinf{u}$ holds
for all $u\in\pcy$ with $\max\set{\Vinf{t_0},\Vinf{s_0}}<\Vinf{u}$.
\end{lem}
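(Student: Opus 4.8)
The plan is to obtain the reverse inequality $\Vinf{\TI(u)}\leq\Vinf{u}$ and combine it with Lemma~\ref{lem2.1}, which already gives $\Vinf{u}\leq\Vinf{\TI(u)}$. The key observation is that the hypotheses on $T$ and $\TI=S$ are perfectly symmetric: $T\colon\pcx\to\pcy$ is a surjective isometry with $t_0=T(\zx)$, while $\TI\colon\pcy\to\pcx$ is a surjective isometry with $s_0=\TI(\zy)$, and both identities in \eqref{norm1} hold. Hence the statement of Lemma~\ref{lem2.1}, with the roles of $X,Y$ and of $\TI,s_0$ interchanged with $Y,X$ and $T,t_0$, is also valid; explicitly, if $f\in\pcx$ satisfies $\Vinf{t_0}<\Vinf{f}$, then $\Vinf{f}\leq\Vinf{T(f)}$. (Alternatively one can simply repeat the three-line computation of Lemma~\ref{lem2.1} verbatim with $T$ in place of $\TI$, avoiding any appeal to symmetry.)

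Next I would fix $u\in\pcy$ with $\max\set{\Vinf{t_0},\Vinf{s_0}}<\Vinf{u}$. Since in particular $\Vinf{s_0}<\Vinf{u}$, Lemma~\ref{lem2.1} yields $\Vinf{u}\leq\Vinf{\TI(u)}$. Consequently $\Vinf{t_0}<\Vinf{u}\leq\Vinf{\TI(u)}$, so the function $f=\TI(u)\in\pcx$ meets the hypothesis $\Vinf{t_0}<\Vinf{f}$ of the symmetric version of Lemma~\ref{lem2.1}. Applying that version and using $T(\TI(u))=u$, we get $\Vinf{\TI(u)}\leq\Vinf{T(\TI(u))}=\Vinf{u}$. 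Combining the two inequalities gives $\Vinf{\TI(u)}=\Vinf{u}$, as desired.

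There is essentially no serious obstacle here; the only point requiring a little care is the order of the two applications. One must first invoke Lemma~\ref{lem2.1} to produce the chain $\Vinf{t_0}<\Vinf{u}\leq\Vinf{\TI(u)}$, and only then is $\TI(u)$ a legitimate input to the symmetric lemma. The stronger hypothesis $\Vinf{t_0}<\Vinf{u}$ (beyond the $\Vinf{s_0}<\Vinf{u}$ needed for Lemma~\ref{lem2.1}) is exactly what makes this second step go through.
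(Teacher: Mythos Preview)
Your proof is correct and follows exactly the same route as the paper's: first apply Lemma~\ref{lem2.1} to get $\Vinf{u}\leq\Vinf{\TI(u)}$, then invoke the symmetric version (Lemma~\ref{lem2.1} with $T,t_0$ in place of $\TI,s_0$) at $f=\TI(u)$, which is legitimate precisely because of the chain $\Vinf{t_0}<\Vinf{u}\leq\Vinf{\TI(u)}$. Your remark about the order of the two applications matches the paper's own care on this point.
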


\begin{proof}
Fix an arbitrary $u\in\pcy$
with $\max\set{\Vinf{t_0},\Vinf{s_0}}<\Vinf{u}$.
Lemma~\ref{lem2.1} shows that
$\Vinf{u}\leq\Vinf{\TI(u)}$,
since $\Vinf{s_0}<\Vinf{u}$ by the choice of $u$.

We may apply Lemma~\ref{lem2.1} to $f\in\pcx$
with $\Vinf{t_0}<\Vinf{f}$, instead of $u\in\pcy$ with
$\Vinf{s_0}<\Vinf{u}$.
Then we have $\Vinf{f}\leq\Vinf{T(f)}$.
Since $\Vinf{t_0}<\Vinf{u}\leq\Vinf{S(u)}$
by the first part of this proof,
we may apply the inequality
$\Vinf{f}\leq\Vinf{T(f)}$ to $f=\TI(u)$. It follows that
$\Vinf{\TI(u)}\leq\Vinf{T(\TI(u))}=\Vinf{u}$,
where we have used that $\TI=T^{-1}$.
We conclude that $\Vinf{S(u)}=\Vinf{u}$.
\end{proof}

Now we are in a position to prove that $t_0=\zy$,
which assures us
that the isometry $T$ is a norm preserving map.

\begin{lem}\label{lem2.3}
The identities $t_0=\zy$  and $s_0=\zx$ hold.
Furthermore, $\Vinf{T(f)}=\Vinf{f}$
for all $f\in\pcx$ and $\Vinf{S(u)}=\Vinf{u}$
for all $u\in\pcy$.
\end{lem}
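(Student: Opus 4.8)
The plan is to prove $t_0=\zy$ and $s_0=\zx$ by contradiction, using the simple observation that replacing $t_0$ by $2t_0$ raises its norm strictly above the threshold $\max\set{\Vinf{t_0},\Vinf{s_0}}$ beyond which Lemma~\ref{lem2.2} applies, whereas the isometry relation keeps the norm of the $\TI$-image unchanged. Write $a=\Vinf{t_0}$ and $b=\Vinf{s_0}$. The hypotheses on $T$ and $\TI=T^{-1}$ are symmetric: both are surjective isometries, mutually inverse, with $t_0=T(\zx)$ and $s_0=\TI(\zy)$. Hence Lemma~\ref{lem2.2} also holds with the roles of $(T,t_0)$ and $(\TI,s_0)$ interchanged, and it is enough to rule out the case $a=\max\set{a,b}>0$.

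Assume then $a\geq b$ and $a>0$. First I would note that $2t_0\in\pcy$ and $\Vinf{2t_0}=2a>a=\max\set{a,b}$, so Lemma~\ref{lem2.2} applied to $u=2t_0$ gives $\Vinf{\TI(2t_0)}=\Vinf{2t_0}=2a$. On the other hand, since $\TI=T^{-1}$ we have $\TI(t_0)=\TI(T(\zx))=\zx$, and $\TI$ is an isometry, so
\[
\Vinf{\TI(2t_0)}=\Vinf{\TI(2t_0)-\TI(t_0)}=\Vinf{2t_0-t_0}=\Vinf{t_0}=a.
\]
Comparing the two values forces $2a=a$, contradicting $a>0$. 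Therefore $\max\set{a,b}=0$, that is, $t_0=\zy$ and $s_0=\zx$. (If instead $b>a$, one runs the same argument with $2s_0\in\pcx$, using $T(s_0)=T(\TI(\zy))=\zy$ and the analogue of Lemma~\ref{lem2.2} for $T$, to obtain $2b=b$.)

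The remaining assertions $\Vinf{T(f)}=\Vinf{f}$ and $\Vinf{\TI(u)}=\Vinf{u}$ then follow at once from \eqref{norm1}, since $T(f)-t_0=T(f)$ and $\TI(u)-s_0=\TI(u)$ once $t_0$ and $s_0$ vanish. I do not foresee a genuine obstacle here: the whole argument turns on the single idea of scaling $t_0$ past the threshold of Lemma~\ref{lem2.2}, and the only delicate point is recording that this threshold argument is symmetric in $(T,t_0)$ and $(\TI,s_0)$, which is what justifies assuming $a\geq b$ without loss of generality.
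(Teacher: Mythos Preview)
Your proof is correct and considerably shorter than the paper's. Both arguments reach a contradiction via Lemma~\ref{lem2.2} together with the identity $\TI(t_0)=\zx$, but the paper manufactures a test function $u$ by Urysohn's lemma on two level sets of $t_0$, with $\Vinf{u}=q=4p+\Vinf{s_0}$ (where $3p=\Vinf{t_0}$), and then shows $\Vinf{u-t_0}\leq q-p$ while $\Vinf{u-t_0}=\Vinf{\TI(u)}=q$. Your choice $u=2t_0$ dispenses with the Urysohn construction entirely: scaling pushes the norm strictly above the threshold of Lemma~\ref{lem2.2}, and the isometry computation $\Vinf{\TI(2t_0)}=\Vinf{2t_0-t_0}=a$ finishes the job. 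The only extra care you need (and you supply it) is the symmetry reduction to $a\geq b$, so that $\max\{a,b\}<2a$ is guaranteed. What the paper's longer argument buys is that it does not appeal to this symmetry---their $u$ is built to satisfy $\Vinf{u}>\max\{\Vinf{t_0},\Vinf{s_0}\}$ directly---but your version is cleaner and avoids any topological construction.
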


\begin{proof}
Suppose, on the contrary, that $t_0(y_0)\neq0$
for some $y_0\in Y$.
Setting $3p=\Vinf{t_0}$, we see that $p>0$.
Define two subsets, $Y_0$ and $Y_1$, of $Y$ as
\[
Y_0=\set{y\in Y:t_0(y)\leq p}
\qq\mbox{and}\qq
Y_1=\set{y\in Y:2p\leq t_0(y)}.
\]
Then
$Y_0$ is a closed subset of $Y$
and $Y_1$ is a compact subset of $Y$,
since $t_0\in\pcy$.
Setting $q=4p+\Vinf{s_0}$,
there exists $u\in\pcy$ such that
$u=0$ on $Y_0$, 
$u=q$ on $Y_1$ and $u(Y)\subset[0,q]$
by Urysohn's lemma.
Fixing an arbitrary $y\in Y$,
we shall prove that $|u(y)-t_0(y)|\leq q-p$
by considering three specific cases.
\begin{itemize}
\item
If $y\in Y_0$, then $u(y)=0$ and $t_0(y)\leq p<q-p$
by the choice of $q$, and thus $|u(y)-t_0(y)|<q-p$.

\item
If $y\in Y_1$, then $u(y)=q$ and $2p\leq t_0(y)$,
which shows that
$|u(y)-t_0(y)|\leq q-2p$.

\item
If $y\in Y\setminus(Y_0\cup Y_1)$,
then $0\leq u(y)\leq q$ and $p<t_0(y)<2p$.
We have
\[
|u(y)-t_0(y)|\leq\max\set{2p,q-p}=q-p,
\]
since $q-p= 3p+\Vinf{s_0}>2p$.
\end{itemize}
The above arguments show that $\Vinf{u-t_0}\leq q-p$.

Notice that
$\max\set{\Vinf{t_0},\Vinf{s_0}}<q=\Vinf{u}$
by the choice of $p$ and $q$.
Applying Lemma~\ref{lem2.2} to $u$, we see that
$\Vinf{\TI(u)}=\Vinf{u}$.
Since $\TI=T^{-1}$ is an isometry,
we obtain $\Vinf{u-t_0}=\Vinf{\TI(u)}=\Vinf{u}=q$
by the last equality,
where we have used that $S(t_0)=S(T(\zx))=\zx$.
Therefore, we obtain
$q=\Vinf{u-t_0}\leq q-p$,
which is impossible because $p>0$.
We thus conclude that $t_0(y)=0$ for all $y\in Y$,
which proves that $T(\zx)=t_0=\zy$. Combining \eqref{norm1} with the last equality, it follows that $\Vinf{T(f)}=\Vinf{T(f)-t_0}=\Vinf{f}$
for all $f\in\pcx$.

Applying the same argument to $S$, we have also that $s_0=\zx$ and $\Vinf{S(u)}=\Vinf{u}$
for all $u\in\pcy$.
\end{proof}

\begin{rem}
We should note that Theorem~\ref{thm2} is a special case of
\cite[Theorem~3.6]{Pe}:
On one hand, Peralta \cite{Pe} showed that every surjective isometry
between two unit spheres of all positive elements of $C^*$-algebras
can be extended to a surjective linear isometry between whole spaces.
On the other hand, we see that
every surjective phase-isometry $T$
between two  positive cones $\pcx$ and $\pcy$
is a norm-preserving surjective isometry 
by Proposition~\ref{prop1.1} and Lemma \ref{lem2.3}.
This implies that the restriction of $T$ to the unit sphere of $\pcx$
is a surjective isometry between
unit spheres of $\pcx$ and $\pcy$.
Then $T$ can be extended to a surjective linear isometry
between whole spaces by \cite[Theorem~3.6]{Pe}.
In this paper, we will give a self-contained proof of
Theorem~\ref{thm2}.
\end{rem}

We employ the peaking function argument
to avoid the G\^ateaux differential,
which was essential in \cite{sun}.
We provide some notations as below.

\begin{nota}
For each $x\in X$ and $y\in Y$,
we define two subsets $\PA{x}$ of $\pcx$
and $\PB{y}$ of $\pcy$ as
\begin{align*}
\PA{x}
&=
\set{f\in\pcx\setminus\set{0}:f(x)=\Vinf{f}},\\
\PB{y}
&=
\set{u\in\pcy\setminus\set{0}:u(y)=\Vinf{u}}.
\end{align*}
For each $f\in \cxr$ and $u\in\cyr$,
we define $\MX{f}$ and $\MY{u}$ as
\[
\MX{f}=\set{x\in X:f(x)=\Vinf{f}}
\qq\mbox{and}\qq
\MY{u}=\set{y\in Y:u(y)=\Vinf{u}}.
\]
\end{nota}

The following simple lemma plays an important role
when we prove uniqueness of peak points.

\begin{lem}\label{lem2.4}
Let $x_0,x_1\in X$.
If $\PA{x_0}\subset\PA{x_1}$,
then $x_0=x_1$.
\end{lem}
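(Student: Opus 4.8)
The plan is to argue by contraposition: assuming $x_0 \neq x_1$, I will construct a function $f \in \PA{x_0}$ that does not belong to $\PA{x_1}$. Since $X$ is a locally compact Hausdorff space, it is completely regular, so points can be separated from closed sets by continuous functions vanishing at infinity. Concretely, since $\{x_1\}$ is closed and $x_0 \notin \{x_1\}$, Urysohn's lemma (in the locally compact Hausdorff setting, using a compact neighborhood of $x_0$) yields a function $f \in \pcx$ with $0 \le f \le 1$, $f(x_0) = 1$, and $f(x_1) = 0$.

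For such an $f$ we have $\Vinf{f} = 1 = f(x_0)$, so $f \in \PA{x_0}$ (note $f \neq \zx$ since $f(x_0) = 1$). On the other hand $f(x_1) = 0 \neq 1 = \Vinf{f}$, so $f \notin \PA{x_1}$. This exhibits an element of $\PA{x_0} \setminus \PA{x_1}$, contradicting the hypothesis $\PA{x_0} \subset \PA{x_1}$. Hence $x_0 = x_1$.

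I do not anticipate any real obstacle here; the only point requiring a little care is the construction of the peaking function $f$ in the merely locally compact (as opposed to compact) case. The standard way is to choose a compact neighborhood $K$ of $x_0$ not containing $x_1$ — possible since $X$ is locally compact Hausdorff and hence locally compact regular — apply Urysohn's lemma on $K$ to get a continuous function supported in $K$ that is $1$ at $x_0$ and $0$ near $\partial K$ and at $x_1$, and extend by zero outside $K$; the resulting function lies in $\cx$ because its support is compact. With $f$ in hand the rest is immediate.
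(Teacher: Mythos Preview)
Your proposal is correct and follows essentially the same approach as the paper: assume $x_0\neq x_1$, produce a Urysohn-type peaking function $f\in\pcx$ with $f(x_0)=1=\Vinf{f}$ and $f(x_1)=0$, and observe that $f\in\PA{x_0}\setminus\PA{x_1}$. The paper simply asserts the existence of such an $f$, whereas you spell out the compact-neighborhood construction; the arguments are otherwise identical.
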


\begin{proof}
Suppose, on the contrary, that $x_0\neq x_1$.
Then there exists $f\in\PA{x_0}$ such that
$f(x_0)=1$, $f(x_1)=0$ and $f(X)\subset[0,1]$.
By the choice of $f$, we obtain
$f\in\PA{x_0}\setminus\PA{x_1}$,
which contradicts $\PA{x_0}\subset\PA{x_1}$.
We thus conclude that $x_0=x_1$.
\end{proof}

We will construct a mapping from $Y$ to $X$.
To this end, we first prove that every function
in $\TI(\PB{y})$ attains its maximum at the same point
of $X$ after proving some lemmata.

\begin{lem}\label{lem2.5}
For each $y_0\in Y$, $n\in\N$ and
$u_k\in\PB{y_0}$ with $0\leq k\leq n$,
the sum $v=\sum_{k=0}^nu_k$ satisfies
$v\in\PB{y_0}$ and
\begin{equation}\label{lem2.5.1}
\TI(v)(x_1)=v(y_0)
\end{equation}   
for some $x_1\in X$.
\end{lem}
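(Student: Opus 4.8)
The plan is to establish the two conclusions in turn.

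\emph{Step 1: $v\in\PB{y_0}$.} Since $u_k\in\PB{y_0}$ means $u_k(y_0)=\Vinf{u_k}$, and $u_k\geq0$ gives $u_k(y)\leq\Vinf{u_k}$ for every $y\in Y$, we obtain
\[
v(y)=\sum_{k=0}^n u_k(y)\leq\sum_{k=0}^n\Vinf{u_k}=\sum_{k=0}^n u_k(y_0)=v(y_0)
\]
for all $y\in Y$. Hence $\Vinf{v}=v(y_0)$. Moreover $\Vinf{v}=\sum_{k=0}^n\Vinf{u_k}\geq\Vinf{u_0}>0$ because $u_0\neq0$, so $v$ is a non-zero element of $\pcy$ that attains its supremum at $y_0$; that is, $v\in\PB{y_0}$.

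\emph{Step 2: existence of $x_1$.} By Lemma~\ref{lem2.3} the isometry $\TI$ is norm-preserving, so $\Vinf{\TI(v)}=\Vinf{v}=v(y_0)>0$. Since $\TI(v)\in\cx$, for every $\e>0$ the set $\set{x\in X:\TI(v)(x)\geq\e}$ is compact; taking $\e=v(y_0)/2$ and using continuity of $\TI(v)$ on this non-empty compact set shows that $\TI(v)$ attains its supremum on $X$, i.e.\ $\MX{\TI(v)}\neq\emptyset$. Choosing any $x_1\in\MX{\TI(v)}$ gives $\TI(v)(x_1)=\Vinf{\TI(v)}=v(y_0)$, which is exactly \eqref{lem2.5.1}.

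I do not foresee a genuine obstacle: the argument is essentially bookkeeping. The only point deserving a word of justification is that a non-zero function in $C_0(X,\R)$ attains its supremum, which is immediate from the vanishing-at-infinity condition together with compactness. The content of the lemma is just the interplay between the elementary observation that a finite sum of functions peaking at $y_0$ again peaks at $y_0$ and the norm-preservation of $\TI$ supplied by Lemma~\ref{lem2.3}.
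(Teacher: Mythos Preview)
Your proof is correct and follows essentially the same approach as the paper: you verify $v\in\PB{y_0}$ via the chain $v(y)\leq\sum\Vinf{u_k}=\sum u_k(y_0)=v(y_0)$, then invoke Lemma~\ref{lem2.3} and the fact that a nonzero function in $\pcx$ attains its supremum to produce $x_1$. The only difference is that you spell out the compactness argument for $\MX{\TI(v)}\neq\emptyset$, whereas the paper simply records it as a consequence of $\TI(v)\in\pcx$.
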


\begin{proof}
Fix $y_0\in Y$, $n\in\N$ and $u_k\in\PB{y_0}$
with $0\leq k\leq n$ arbitrarily.
Setting $v=\sum_{k=0}^nu_k$, we have
\[
0\leq\sum_{k=0}^nu_k(y_0)
=v(y_0)
\leq\Vinf{v}
\leq\sum_{k=0}^n\Vinf{u_k}
=\sum_{k=0}^nu_k(y_0)
=v(y_0),
\]
since $u_k\geq0$ and $u_k\in\PB{y_0}$.
This shows that $v(y_0)=\Vinf{v}$,
and thus, $v\in\PB{y_0}$.
We note that $\MX{\TI(v)}\neq\emptyset$,
since $\TI(v)\in\pcx$.
Then there exists $x_1\in\MX{\TI(v)}$,
that is, $\TI(v)(x_1)=\Vinf{\TI(v)}$.
It follows from Lemma~\ref{lem2.3} that
$\TI(v)(x_1)=\Vinf{\TI(v)}=\Vinf{v}=v(y_0)$.
\end{proof}

We will further deepen the above argument.

\begin{lem}\label{lem2.6}
For each $y_0\in Y$, $n\in\N$ and
$u_k\in\PB{y_0}$ with $0\leq k\leq n$,
the intersection $\cap_{k=0}^n\MX{\TI(u_k)}$
is a non-empty set.
\end{lem}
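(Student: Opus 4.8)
The plan is to reduce the general statement to the two-function case $n=1$ already implicit in Lemma~\ref{lem2.5}, and then build the full intersection by an induction that exploits the fact that sums of peaking functions at $y_0$ are again peaking functions at $y_0$. First I would record the base case: for a single $u_0\in\PB{y_0}$, Lemma~\ref{lem2.5} (applied with $n=0$) gives some $x_1\in\MX{\TI(u_0)}$, so this set is non-empty; this is just the observation that $\TI(u_0)\in\pcx$ attains its supremum somewhere. The content is therefore in showing that finitely many of the sets $\MX{\TI(u_k)}$ share a common point.

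The key step is the following claim: if $x_1\in X$ satisfies $\TI(v)(x_1)=v(y_0)$ where $v=\sum_{k=0}^n u_k$, then in fact $\TI(u_k)(x_1)=\Vinf{u_k}$ for every $k$, i.e.\ $x_1\in\bigcap_{k=0}^n\MX{\TI(u_k)}$. To prove this, note that for each $x\in X$ we have $0\le\TI(u_k)(x)\le\Vinf{\TI(u_k)}=\Vinf{u_k}=u_k(y_0)$ by Lemma~\ref{lem2.3} and $u_k\in\PB{y_0}$. Summing over $k$ and evaluating at $x=x_1$,
\[
v(y_0)=\TI(v)(x_1)=\sum_{k=0}^n\TI(u_k)(x_1)\le\sum_{k=0}^n u_k(y_0)=v(y_0),
\]
provided $\TI$ is \emph{additive on $\PB{y_0}$}, which is the one ingredient I do not yet have for free. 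If that additivity holds, equality is forced throughout, whence $\TI(u_k)(x_1)=u_k(y_0)=\Vinf{u_k}$ for every $k$, giving the desired common maximizer and concluding the proof via Lemma~\ref{lem2.5}, which supplies such an $x_1$.

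The main obstacle is thus justifying that $\TI\bigl(\sum_{k=0}^n u_k\bigr)=\sum_{k=0}^n\TI(u_k)$ on $X$ for $u_k\in\PB{y_0}$, or finding a way around it. One route is to avoid additivity altogether and argue more carefully from the isometry: take $x_1\in\MX{\TI(v)}$ as in Lemma~\ref{lem2.5}, and for a fixed index $j$ consider the two elements $v$ and $u_j$ of $\pcy$, together with $w=\sum_{k\ne j}u_k$, noting $v=u_j+w$ and $w,u_j,v\in\PB{y_0}$ with $\Vinf{v}=\Vinf{u_j}+\Vinf{w}$. Since $T$ is an isometry, $\Vinf{\TI(v)-\TI(u_j)}=\Vinf{v-u_j}=\Vinf{w}$ and $\Vinf{\TI(v)-\TI(w)}=\Vinf{u_j}$; combined with $\TI(v)(x_1)=\Vinf{v}=\Vinf{u_j}+\Vinf{w}$ and $0\le\TI(u_j)(x_1)\le\Vinf{u_j}$, $0\le\TI(w)(x_1)\le\Vinf{w}$, the triangle inequality at the point $x_1$ pins down $\TI(u_j)(x_1)=\Vinf{u_j}$, i.e.\ $x_1\in\MX{\TI(u_j)}$. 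Running this over all $j$ shows $x_1\in\bigcap_{k=0}^n\MX{\TI(u_k)}$, which is therefore non-empty. I expect the delicate point to be extracting $\TI(u_j)(x_1)=\Vinf{u_j}$ cleanly from the norm identities without circular use of additivity; phrasing everything pointwise at $x_1$ and using non-negativity of all functions involved should make this work.
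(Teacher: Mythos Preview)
Your second route is exactly the paper's argument, and it goes through without any delicacy: with $v=\sum_k u_k$, $w=\sum_{k\ne j}u_k$, and $x_1\in\MX{\TI(v)}$ from Lemma~\ref{lem2.5}, the single isometry identity $\Vinf{\TI(v)-\TI(u_j)}=\Vinf{v-u_j}=\Vinf{w}$ evaluated pointwise at $x_1$ gives
\[
\TI(v)(x_1)-\TI(u_j)(x_1)\le\Vinf{w},
\]
and since $\TI(v)(x_1)=\Vinf{v}=\Vinf{u_j}+\Vinf{w}$ this forces $\TI(u_j)(x_1)\ge\Vinf{u_j}$, while Lemma~\ref{lem2.3} gives $\TI(u_j)(x_1)\le\Vinf{\TI(u_j)}=\Vinf{u_j}$; hence $x_1\in\MX{\TI(u_j)}$ for every $j$. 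Your first route via additivity of $\TI$ is indeed unavailable at this stage, and the paper does not use it either; you can drop that paragraph and state the isometry argument directly.
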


\begin{proof}
Let $u_k\in\PB{y_0}$ for $0\leq k\leq n$
and $v=\sum_{k=0}^nu_k$. By Lemma~\ref{lem2.5}, 
we see that $v\in\PB{y_0}$ and 
there exists $x_1\in X$ such that \eqref{lem2.5.1} holds. Fixing an arbitrary $m$ with $0\leq m\leq n$,
we shall prove that $x_1\in\MX{\TI(u_m)}$.
We set $v_m=v-u_m$.
We may apply Lemma~\ref{lem2.5} to $v_m$
to obtain $v_m\in\PB{y_0}$,
since $v_m$ is the finite sum of functions
$u_k$ with $k\in\set{0,\dots,n}\setminus\set{m}$.
Hence, $v_m(y_0)=\Vinf{v_m}$.
We obtain
\[
\TI(v)(x_1)-\TI(u_m)(x_1)
\leq\Vinf{\TI(v)-\TI(u_m)}
=\Vinf{v-u_m}
=\Vinf{v_m}
=v_m(y_0),
\]
because $\TI$ is an isometry.
Thus, $\TI(v)(x_1)-v_m(y_0)\leq\TI(u_m)(x_1)$,
which shows that
\[
\Vinf{u_m}
=u_m(y_0)
=v(y_0)-v_m(y_0)
=\TI(v)(x_1)-v_m(y_0)
\leq\TI(u_m)(x_1)
\leq\Vinf{\TI(u_m)},
\]
where we have used that \eqref{lem2.5.1}.
It follows from Lemma~\ref{lem2.3} that
$\Vinf{\TI(u_m)}=\Vinf{u_m}$,
and therefore, $\TI(u_m)(x_1)=\Vinf{\TI(u_m)}$
by the above inequalities.
This proves that $x_1\in\MX{\TI(u_m)}$ as desired.
\end{proof}

Now we are in a position to prove that each function
in $\TI(\PB{y})$ attains its maximum at the same point.

\begin{lem}\label{lem2.7}
For each $y_0\in Y$,
the intersection $\cap_{f\in\TI(\PB{y_0})}\MX{f}$
is a non-empty set.
\end{lem}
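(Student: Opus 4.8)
The plan is to deduce this from Lemma~\ref{lem2.6} by a compactness argument: the finite intersection property furnished by Lemma~\ref{lem2.6} upgrades to a non-empty total intersection once we know that the sets $\MX{\TI(u)}$ are compact.

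First I would verify that for every $u\in\PB{y_0}$ the set $\MX{\TI(u)}$ is a non-empty compact subset of $X$. It is non-empty because $\TI(u)\in\pcx$ attains its supremum norm somewhere; indeed $\Vinf{\TI(u)}=\Vinf{u}>0$ by Lemma~\ref{lem2.3} together with $u\neq0$. For compactness, note that $\TI(u)$ vanishes at infinity, so $\set{x\in X:\TI(u)(x)\geq\Vinf{u}/2}$ is a compact subset of $X$, and $\MX{\TI(u)}$ is a closed subset of it. I would also record that $\PB{y_0}\neq\emptyset$: since $Y$ is locally compact Hausdorff, Urysohn's lemma provides $u\in\pcy$ with $u(y_0)=1=\Vinf{u}$, so $u\in\PB{y_0}$.

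Next, fix an arbitrary $u_*\in\PB{y_0}$ and consider the family $\set{\MX{\TI(u_*)}\cap\MX{\TI(u)}:u\in\PB{y_0}}$ of closed subsets of the compact space $\MX{\TI(u_*)}$. Given finitely many $u_1,\dots,u_n\in\PB{y_0}$, applying Lemma~\ref{lem2.6} to the list $u_*,u_1,\dots,u_n$ shows that $\MX{\TI(u_*)}\cap\bigcap_{k=1}^{n}\MX{\TI(u_k)}\neq\emptyset$; hence this family has the finite intersection property. By compactness of $\MX{\TI(u_*)}$, the intersection of the whole family is non-empty, and that intersection is precisely $\bigcap_{f\in\TI(\PB{y_0})}\MX{f}$.

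I do not anticipate a genuine obstacle here, since the substantive combinatorial work has already been done in Lemmata~\ref{lem2.5} and~\ref{lem2.6}. The only point requiring a little care is the verification that the level sets $\MX{\TI(u)}$ are compact, which rests entirely on the functions in $\cx$ vanishing at infinity; this is exactly the place where the present locally compact setting differs from the compact case.
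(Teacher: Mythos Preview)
Your proposal is correct and follows essentially the same approach as the paper: fix one element of $\TI(\PB{y_0})$, use that its maximizing set is compact, and invoke the finite intersection property supplied by Lemma~\ref{lem2.6}. The extra details you include---the explicit Urysohn argument for $\PB{y_0}\neq\emptyset$ and the level-set verification of compactness---are routine elaborations the paper leaves implicit.
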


\begin{proof}
Fix $y_0\in Y$ and $f_0\in\TI(\PB{y_0})$ arbitrarily,
and then
$\cap_{f\in\TI(\PB{y_0})}\MX{f}
=\cap_{f\in\TI(\PB{y_0})}(\MX{f}\cap\MX{f_0})$.
We see that $\MX{f_0}$ is a compact set,
since $f_0\in\pcx$,
and thus $\MX{f}\cap\MX{f_0}$ is a closed subset
of the compact set $\MX{f_0}$ for each $f\in\TI(\PB{y_0})$.
Hence, we may apply the finite intersection property
to show that $\cap_{f\in\TI(\PB{y_0})}(\MX{f}\cap\MX{f_0})
\neq\emptyset$.
Choose arbitrary $n\in\N$ and $f_k\in\TI(\PB{y_0})$
with $1\leq k\leq n$.
We need to prove that
$\cap_{k=1}^n(\MX{f_k}\cap\MX{f_0})
=\cap_{k=0}^n\MX{f_k}\neq\emptyset$.
There exists $u_k\in\PB{y_0}$ such that
$f_k=\TI(u_k)$ for each $0\leq k\leq n$,
since $f_k\in\TI(\PB{y_0})$.
We derive from Lemma~\ref{lem2.6} that
$\cap_{k=0}^n\MX{f_k}
=\cap_{k=0}^n\MX{\TI(u_k)}\neq\emptyset$,
as desired.
\end{proof}

Next, we shall prove that each $\PB{y}$ corresponds
to one and only one $\PA{x}$.

\begin{lem}\label{lem2.8}
For each $y_0\in Y$, there exists a unique $x_0\in X$
such that $S(\PB{y_0})=\PA{x_0}$.
\end{lem}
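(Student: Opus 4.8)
The plan is to produce the point $x_0$ from Lemma~\ref{lem2.7}, then establish the two inclusions $\TI(\PB{y_0})\subseteq\PA{x_0}$ and $\PA{x_0}\subseteq\TI(\PB{y_0})$ separately, the first being routine and the second requiring a symmetry argument, and finally to read off uniqueness from Lemma~\ref{lem2.4}.

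First I would fix $y_0\in Y$ and, by Lemma~\ref{lem2.7}, choose a point $x_0\in\cap_{f\in\TI(\PB{y_0})}\MX{f}$. For the inclusion $\TI(\PB{y_0})\subseteq\PA{x_0}$, take $u\in\PB{y_0}$; then $\TI(u)\in\pcx$ and, by Lemma~\ref{lem2.3}, $\Vinf{\TI(u)}=\Vinf{u}>0$ since $u\neq\zy$, so $\TI(u)\neq\zx$; moreover $x_0\in\MX{\TI(u)}$ by the choice of $x_0$, hence $\TI(u)(x_0)=\Vinf{\TI(u)}$ and $\TI(u)\in\PA{x_0}$.

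For the reverse inclusion I would exploit that the roles of $T$ and $\TI=T^{-1}$ are symmetric: both fix the zero function and are norm-preserving isometries by Lemma~\ref{lem2.3}, so Lemmas~\ref{lem2.4} and~\ref{lem2.7} have analogues with $X$ and $Y$, and $T$ and $\TI$, and $\PA{\cdot}$ and $\PB{\cdot}$, interchanged. Applying the $X$-analogue of Lemma~\ref{lem2.7} to $T$ at the point $x_0$ gives $y_1\in\cap_{u\in T(\PA{x_0})}\MY{u}$; arguing as in the previous paragraph with $T$ in place of $\TI$ yields $T(\PA{x_0})\subseteq\PB{y_1}$, and applying $\TI$ gives $\PA{x_0}\subseteq\TI(\PB{y_1})$. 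Combining, $\TI(\PB{y_0})\subseteq\PA{x_0}\subseteq\TI(\PB{y_1})$, and since $\TI$ is injective, $\PB{y_0}\subseteq\PB{y_1}$. The $Y$-analogue of Lemma~\ref{lem2.4} then forces $y_0=y_1$, so $\TI(\PB{y_1})=\TI(\PB{y_0})$ and all the inclusions become equalities, giving $\TI(\PB{y_0})=\PA{x_0}$. Finally, uniqueness is immediate from Lemma~\ref{lem2.4}: if $\TI(\PB{y_0})=\PA{x_0}=\PA{x_1}$, then $\PA{x_0}\subseteq\PA{x_1}$, whence $x_0=x_1$. The main obstacle is the reverse inclusion; once one commits to running the argument symmetrically through $T$ and invoking the uniqueness-type Lemma~\ref{lem2.4} (and its $Y$-analogue), the proof is short, but it does rely on having the symmetric versions of the auxiliary lemmas at hand.
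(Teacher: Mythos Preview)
Your proposal is correct and follows essentially the same route as the paper: pick $x_0$ via Lemma~\ref{lem2.7} to get $\TI(\PB{y_0})\subset\PA{x_0}$, run the symmetric argument through $T$ at $x_0$ to obtain $\PA{x_0}\subset\TI(\PB{y_1})$, use injectivity to deduce $\PB{y_0}\subset\PB{y_1}$ and hence $y_0=y_1$ by (the $Y$-version of) Lemma~\ref{lem2.4}, and conclude uniqueness the same way. The only cosmetic difference is that you spell out why $\TI(u)\neq\zx$ and why $\TI$ injective lets you strip $\TI$ from the chain of inclusions, details the paper leaves implicit.
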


\begin{proof}
Take an arbitrary $y_0\in Y$.
There exists $x_0\in\cap_{f\in\TI(\PB{y_0})}\MX{f}$
by Lemma~\ref{lem2.7}.
This implies that $f\in\PA{x_0}$ 
for all $f\in \TI(\PB{y_0})$,
and consequently, $\TI(\PB{y_0})\subset\PA{x_0}$.
We may apply the last argument and Lemma~\ref{lem2.7}
to the pair of $(T,x_0)$, instead of $(S,y_0)$.
Then there exists $y_1\in Y$ such that
$T(\PA{x_0})\subset\PB{y_1}$,
and thus $\PA{x_0}\subset T^{-1}(\PB{y_1})=\TI(\PB{y_1})$.
By combining the first part of this proof,
we obtain $\TI(\PB{y_0})\subset\PA{x_0}\subset\TI(\PB{y_1})$,
and hence $\PB{y_0}\subset\PB{y_1}$.
We may apply Lemma~\ref{lem2.4} to the pair of
$(\PB{y_0},\PB{y_1})$, instead of $(\PA{x_0},\PA{x_1})$,
and then we obtain $y_0=y_1$.
Therefore, we conclude that
$\TI(\PB{y_0})=\PA{x_0}$.

If $x_1\in X$ satisfies $S(\PB{y_0})=\PA{x_1}$,
then we get $\PA{x_0}=\PA{x_1}$.
Lemma~\ref{lem2.4} shows that $x_0=x_1$,
and thus such $x_0$ is uniquely determined.
\end{proof}

The following lemma plays a key role in generalizing
\cite[Theorem~2.7]{sun}.
The idea of our proof of it is based on
\cite[Lemma~2.17]{cue}.

\begin{lem}\label{lem2.9}
For each $u\in\pcy$ and $y_0\in Y$,
there exists $v\in\PB{y_0}$ such that
$u+v\in\PB{y_0}$.
\end{lem}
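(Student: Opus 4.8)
The plan is to produce the peaking function $v$ by an explicit construction. Since $u,v\geq0$, we always have $(u+v)(y_0)\leq\Vinf{u+v}$, so asking that $u+v\in\PB{y_0}$ is the same as asking that $u(y)+v(y)\leq u(y_0)+v(y_0)$ for every $y\in Y$; similarly $v\in\PB{y_0}$ just means that $v\in\pcy$, $v\neq\zy$, and $v(y)\leq v(y_0)$ for all $y$. So I want a nonzero $v\in\pcy$ attaining its supremum at $y_0$, with the additional feature that $v(y)\leq N+u(y_0)-u(y)$ for all $y$, where $N=v(y_0)$; such a bound forces $u(y)+v(y)\leq N+u(y_0)=(u+v)(y_0)$ automatically.

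To this end, set $c=u(y_0)$ and $N=\Vinf{u}+1$, and define $\beta\colon Y\to\R$ by $\beta(y)=\min\set{N,\,N+c-u(y)}$. Then $\beta$ is continuous, and because $0\leq u\leq\Vinf{u}=N-1$ one checks directly that $1+c\leq\beta\leq N$, that $\beta(y_0)=N$, and that $\beta(y)\leq N+c-u(y)$ for every $y$. The function $\beta$ does not vanish at infinity (it tends to $N$), and repairing this is the only genuine issue: by Urysohn's lemma choose $\rho\in\pcy$ with $0\leq\rho\leq1$ and $\rho(y_0)=1$ (one may even take $\rho$ of compact support), and put $v=\rho\beta$. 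Since $\beta$ is bounded and $\rho\in\cy$, the product $v$ lies in $\cy$, and $v\geq0$, so $v\in\pcy$.

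The two membership claims then follow by routine checks. From $0\leq\rho\leq1$ and $\beta\geq0$ we get $v\leq\beta\leq N$, while $v(y_0)=\rho(y_0)\beta(y_0)=N>0$; hence $v\neq\zy$ and $\Vinf{v}=N=v(y_0)$, i.e.\ $v\in\PB{y_0}$. For $u+v$: for each $y\in Y$ we have $v(y)\leq\beta(y)\leq N+c-u(y)$, so $u(y)+v(y)\leq N+c$, whereas $(u+v)(y_0)=c+N$; therefore $u+v\neq\zy$ and $\Vinf{u+v}=N+c=(u+v)(y_0)$, that is $u+v\in\PB{y_0}$. Note that the choice $N=\Vinf{u}+1$ also covers the degenerate case $u=\zy$, so no separate case analysis is needed.

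The step I expect to be the main obstacle is recognizing that the naive candidate $\beta$ for $v$ cannot belong to $\cy$, and that multiplying by a Urysohn cutoff repairs this at no cost: passing from $\beta$ to $\rho\beta$ with $0\leq\rho\leq1$ only decreases the function pointwise, so the single inequality that matters, $v\leq N+c-u$, is preserved, and the peak at $y_0$ survives because $\rho(y_0)=1$.
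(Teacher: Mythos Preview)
Your argument is correct and is genuinely different from the paper's. The paper builds $v$ as an infinite dyadic series $v=\sum_{n\ge1}u_n/2^n$, where each $u_n\in\PB{y_0}$ is a Urysohn function of height $3\Vinf{u}$ vanishing on the level set $Y_0\cup Y_n=\{\,y:|u(y)-u(y_0)|\ge3\Vinf{u}/2^{n+2}\text{ or }\ge3\Vinf{u}/4\,\}$; one then checks $u(y)+v(y)\le u(y_0)+3\Vinf{u}$ by a three-case analysis depending on which $Y_k$ contains $y$. Your construction replaces all of this with a single truncation $\beta=\min\{N,\,N+c-u\}$ followed by a single Urysohn cutoff $\rho$, and the key inequality $u+v\le N+c$ holds tautologically because $v\le\beta\le N+c-u$. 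This is shorter, avoids the series and the case split, and even handles $u=\zy$ uniformly via the shift $N=\Vinf{u}+1$ (the paper's choice $3\Vinf{u}$ degenerates there). The paper's approach, borrowed from \cite{cue}, is more flexible in contexts where one needs finer control on where $v$ is small, but for the present lemma your direct construction is cleaner.
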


\begin{proof}
Taking arbitrary $u\in\pcy$ and $y_0\in Y$,
we define subsets $Y_0$ and $Y_n$ of $Y$ as
\begin{align}\label{lem2.9.1}
\begin{split}
Y_0
&=
\set{y\in Y:\fr{3\Vinf{u}}{4}\leq|u(y)-u(y_0)|},\\
Y_n
&=
\set{y\in Y:\fr{3\Vinf{u}}{2^{n+2}}
\leq|u(y)-u(y_0)|
\leq\fr{3\Vinf{u}}{2^{n+1}}}
\end{split}
\end{align}
for each $n\in\N$.
We see that $Y_k$ is a closed subset of $Y$
with $y_0\not\in Y_k$ for all $k\in\N\cup\set{0}$.
Choose $u_n\in\PB{y_0}$ satisfying 
\begin{equation}\label{lem2.9.2}
u_n(y_0)=3\Vinf{u}
\qq\mbox{and}\qq
u_n(Y_0\cup Y_n)=\set{0}
\end{equation}
for each $n\in\N$. 
We set $v=\sum_{n=1}^\infty u_n/2^n$.
Since $u_n\in \PB{y_0}$ with
$\Vinf{u_n}=3\Vinf{u}$ for all $n\in\N$, the series $\sum_{n=1}^\infty u_n/2^n$
converges in $\pcy$ and 
$\Vinf{v}=3\Vinf{u}$. Fixing an arbitrary $y\in Y$, we shall prove that
$u(y)+v(y)\leq u(y_0)+3\Vinf{u}$
by considering three specific cases.
\begin{itemize}
\item
If $y\in Y_0$, then $v(y)=0$ by \eqref{lem2.9.2}. This implies  that
$u(y)+v(y)\leq \Vinf{u}\leq u(y_0)+3\Vinf{u}$.

\item
If $y\in Y_m$ for some $m\in\N$, then 
we see that
$u(y)\leq u(y_0)+3\Vinf{u}/2^{m+1}$
and $u_m(y)=0$
 by \eqref{lem2.9.1} and \eqref{lem2.9.2}.
Applying $u_m(y)=0$, we get
\[
v(y)
=\sum_{n=1}^\infty\fr{u_n(y)}{2^n}
=\sum_{n\neq m}\fr{u_n(y)}{2^n}
\leq3\Vinf{u}\left(1-\fr{1}{2^m}\right),
\]
since $\Vinf{u_n}=3\Vinf{u}$ for all $n\in\N$.
By combining the last two inequalities, we obtain
\[
u(y)+v(y)
\leq
u(y_0)+3\Vinf{u}\left(\fr{1}{2^{m+1}}+1-\fr{1}{2^m}\right)
<u(y_0)+3\Vinf{u}.
\]

\item
If $y\not\in Y_k$ for all $k\in\N\cup\set{0}$,
then we have $u(y)=u(y_0)$ by \eqref{lem2.9.1},
and thus 
it follows from $\Vinf{v}=3\Vinf{u}$ that 
$u(y)+v(y)\leq u(y_0)+3\Vinf{u}$.
\end{itemize}
We derive from the above arguments that
$u(y)+v(y)\leq u(y_0)+3\Vinf{u}=u(y_0)+v(y_0)$,
which yields $u+v\in\PB{y_0}$.
\end{proof}

\section{Proof of main results}

We will keep 
 the notation 
in the last section.
Lemma~\ref{lem2.8} ensures that
there exists a well-defined map $\phi\colon Y\to X$
such that $S(\PB{y})=\PA{\phi(y)}$ for all $y\in Y$.
By the same manner, we can define a map
$\tau\colon X\to Y$ that satisfies 
$T(\PA{x})=\PB{\tau(x)}$ for all $x\in X$.

\begin{proof}[\textbf{Proof of Theorem~\ref{thm2}}]
Fixing arbitrary $f\in\pcx$ and $y\in Y$,
we shall prove that
\begin{equation}\label{lem3.1.1}
T(f)(y)=f(\phi(y))
\end{equation}
holds.
We first prove that $T(f)(y)\leq f(\phi(y))$.
There exists $v\in\PB{y}$ such that
$T(f)+v\in\PB{y}$ by Lemma~\ref{lem2.9}.
Then we obtain
$S(T(f)+v)\in\PA{\phi(y)}$,
since $S(\PB{y})=\PA{\phi(y)}$.
This implies that
\begin{align*}
S(T(f)+v)(\phi(y))
&=
\Vinf{S(T(f)+v)}
=\Vinf{T(f)+v}
=T(f)(y)+v(y),
\end{align*}
where we have used Lemma~\ref{lem2.3}.
Noting that $\TI=T^{-1}$ is an isometry,
we infer from the last equalities that
\begin{align*}
T(f)(y)+v(y)-f(\phi(y))
&=
\TI(T(f)+v)(\phi(y))-\TI(T(f))(\phi(y))\\
&\leq
\Vinf{\TI(T(f)+v)-\TI(T(f))}
=\Vinf{T(f)+v-T(f)}
=\Vinf{v}.
\end{align*}
We note that $v(y)=\Vinf{v}$,
since $v\in\PB{y}$.
The above inequalities prove that $T(f)(y)\leq f(\phi(y))$.

We may apply Lemma~\ref{lem2.9} to 
$f\in\pcx$ with $\phi(y)\in X$, 
and then there exists $g\in\PA{\phi(y)}$ such that
$f+g\in\PA{\phi(y)}$.
Then $f(\phi(y))+g(\phi(y))=\Vinf{f+g}$
and $T(f+g)\in T(\PA{\phi(y)})=T\left(\TI(\PB{y})\right)=\PB{y}$.
Lemma~\ref{lem2.3} assures that
\[
T(f+g)(y)=\Vinf{T(f+g)}=\Vinf{f+g}=f(\phi(y))+g(\phi(y)).
\]
We derive from the last equalities that
\begin{align*}
f(\phi(y))+g(\phi(y))-T(f)(y)
&=
T(f+g)(y)-T(f)(y)\\
&\leq
\Vinf{T(f+g)-T(f)}
=\Vinf{g},
\end{align*}
since $T$ is an isometry.
Keeping in mind that $g\in\PA{\phi(y)}$,
we get $g(\phi(y))=\Vinf{g}$,
and consequently, we obtain
$f(\phi(y))\leq T(f)(y)$ by the inequalities above.
We thus conclude that \eqref{lem3.1.1} holds.

We need to prove that
$\phi\colon Y\to X$ is a homeomorphism.
By applying the definitions of $\phi$ and $\tau$,
we obtain
$\TI(\PB{y})=\PA{\phi(y)}$
and $T(\PA{\phi(y)})=\PB{\tau(\phi(y))}$,
and consequently,
$\PB{y}=T(S(\PB{y}))=T(\PA{\phi(y)})=\PB{\tau(\phi(y))}$,
where we have used that $\TI=T^{-1}$.
Lemma~\ref{lem2.4}, applied to the last equalities,
shows that $y=\tau(\phi(y))$.
By a quite similar argument, we have
$x=\phi(\tau(x))$ for all $x\in X$.
These two equalities show that
$\phi$ and $\tau$ are both bijective maps
with $\phi^{-1}=\tau$.

We shall prove that $\phi\colon Y\to X$ is continuous.
Take an arbitrary open set $O$ in $X$.
Fixing $y_0\in\phi^{-1}(O)$ arbitrarily,
we can find $f_0\in\PA{\phi(y_0)}$
with $f_0(\phi(y_0))=2=\Vinf{f_0}$
and $f_0(X\setminus O)=\set{0}$.
Setting $U=\set{y'\in Y:T(f_0)(y')>1}$,
we see that $U$ is an open subset of $Y$.
We infer from \eqref{lem3.1.1} that
$T(f_0)(y_0)=f_0(\phi(y_0))=2$,
which implies that $y_0\in U$.
We obtain $f_0(\phi(y'))=T(f_0)(y')>1$
for each $y'\in U$
by \eqref{lem3.1.1}.
This shows that $\phi(U)\subset O$,
since $f_0(X\setminus O)=\set{0}$,
and consequently, $\phi^{-1}(O)$ is
an open set in $Y$.
We have proved that $\phi$ is continuous on $Y$.

We may apply the above arguments
to the pair of $(S,\tau)$, instead of  $(T,\phi)$.
Then we get that $S(u)(x)=u(\tau(x))$ holds
for all $u\in\pcy$ and $x\in X$, 
and thus, 
$\tau$ is continuous on $X$.
This proves that $\phi$ is a homeomorphism,
since $\tau=\phi^{-1}$.
\end{proof}

\begin{proof}[\textbf{Proof of Theorem~\ref{thm1}}]
Suppose that $T\colon\pcx\to\pcy$ is a surjective
phase-isometry.
Then $T$ is a surjective isometry
by Proposition~\ref{prop1.1}.
Applying Theorem~\ref{thm2} to $T$, 
there exists a homeomorphism $\phi\colon Y\to X$
such that $T(f)(y)=f(\phi(y))$ holds
for all $f\in\pcx$ and $y\in Y$.
\end{proof}

\begin{cor}\label{cor3.2}
If $T\colon\pcx\to\pcy$ is a surjective isometry,
then there exists a unique surjective  linear isometry 
$\TT\colon\cx\to\cy$ such that
$\TT=T$ on $\pcx$.
\end{cor}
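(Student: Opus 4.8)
The plan is to take for $\TT$ the composition operator induced by the homeomorphism $\phi\colon Y\to X$ produced by Theorem~\ref{thm2}: define
\[
\TT(h)=h\circ\phi\qquad(h\in\cx).
\]
The first step is to check that $\TT$ is well defined as a map $\cx\to\cy$. For $h\in\cx$ the function $h\circ\phi$ is continuous because $\phi$ is continuous, and it vanishes at infinity because $\phi$ is a homeomorphism, hence proper: given $\e>0$, the set $\set{x\in X:|h(x)|\ge\e}$ is compact, so its preimage under $\phi$, namely $\tau\bigl(\set{x\in X:|h(x)|\ge\e}\bigr)$ with $\tau=\phi^{-1}$, is compact in $Y$.

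The second step is to verify that $\TT$ has the stated properties. Linearity is immediate from the pointwise definition. It is an isometry because $\phi$ is onto:
\[
\Vinf{\TT(h)}=\sup_{y\in Y}|h(\phi(y))|=\sup_{x\in X}|h(x)|=\Vinf{h}.
\]
It is surjective because $\tau=\phi^{-1}$ is also a homeomorphism, so for any $u\in\cy$ the function $u\circ\tau$ lies in $\cx$ and $\TT(u\circ\tau)=u\circ\tau\circ\phi=u$. Finally, $\TT=T$ on $\pcx$: for $f\in\pcx$ and $y\in Y$ we have $\TT(f)(y)=f(\phi(y))=T(f)(y)$ by Theorem~\ref{thm2}.

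For uniqueness, the key observation is that $\cx=\pcx-\pcx$: every $h\in\cx$ decomposes as $h=h^{+}-h^{-}$ with $h^{+}=\max\set{h,0}$ and $h^{-}=\max\set{-h,0}$, both of which belong to $\pcx$. Hence, if $\TT_1,\TT_2\colon\cx\to\cy$ are surjective linear isometries with $\TT_1=T=\TT_2$ on $\pcx$, then for every $h\in\cx$ linearity gives
\[
\TT_1(h)=\TT_1(h^{+})-\TT_1(h^{-})=T(h^{+})-T(h^{-})=\TT_2(h^{+})-\TT_2(h^{-})=\TT_2(h),
\]
so $\TT_1=\TT_2$.

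I do not expect a genuine obstacle here: the substantive content—identifying $\phi$ and proving $T(f)(y)=f(\phi(y))$—is already contained in Theorem~\ref{thm2}, and what remains is the classical Banach--Stone-type bookkeeping that a composition operator induced by a homeomorphism between locally compact Hausdorff spaces is a surjective linear isometry between the corresponding $C_0$-spaces. The only points that require a moment's care are the ``vanishing at infinity'' clause (using properness of a homeomorphism) in the well-definedness step, and, for uniqueness, recording the decomposition $h=h^{+}-h^{-}$. Corollary~\ref{cor1} then follows at once: a surjective phase-isometry is a surjective isometry by Proposition~\ref{prop1.1}, so the present corollary applies verbatim.
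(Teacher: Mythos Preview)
Your argument is correct. The existence half is essentially the paper's: define $\TT(h)=h\circ\phi$ and verify it is a surjective linear isometry extending $T$. The paper leaves well-definedness and the isometry property as ``routine'' and proves surjectivity via the positive/negative decomposition of $u\in\cy$ together with surjectivity of $T$, whereas you go directly through the inverse homeomorphism $\tau=\phi^{-1}$; both are straightforward. The genuine divergence is in the uniqueness clause. The paper invokes the Banach--Stone theorem to represent any competing extension $T'$ as $T'(h)(y)=\alpha(y)h(\varphi(y))$ and then compares with $T$ on $\pcx$ to force $\alpha\equiv1$ and $\varphi=\phi$. Your argument is more elementary and more robust: since $\cx=\pcx-\pcx$ via $h=h^{+}-h^{-}$, any two linear maps agreeing on $\pcx$ must coincide on all of $\cx$, with no need for surjectivity, the isometry hypothesis, or Banach--Stone. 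The paper's route highlights the structural classification of isometries on $C_0$-spaces; yours isolates exactly what is needed (linearity plus $\pcx$ spanning $\cx$) and would transfer unchanged to any ordered vector space generated by its positive cone.
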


\begin{proof}
There exists a homeomorphism $\phi\colon Y\to X$
such that $T(f)(y)=f(\phi(y))$ for all $f\in\pcx$
and $y\in Y$ by Theorem~\ref{thm2}.
We define $\TT\colon\cx\to\cy$ as
$\TT(h)=h\circ\phi$ for each $h\in\cx$.
It is routine to check that $\TT$ is a linear isometry with $\TT=T$ on $\pcx$.
We shall show that $\TT$ is surjective.
Taking an arbitrary $u\in\cy$,
we denote by $u^+$ and $u^-$ the positive
and negative parts of $u$, respectively.
Since $T$ is surjective, there exist
$f_1,f_2\in\pcx$ satisfying $T(f_1)=u^+$
and $T(f_2)=u^-$.
Noting that $f_1,f_2\in\pcx$ with $f_1-f_2\in\cx$,
we have
\[
\TT(f_1-f_2)=(f_1-f_2)\circ\phi
=f_1\circ\phi-f_2\circ\phi
=T(f_1)-T(f_2)
=u^+-u^-=u.
\]
We thus conclude that $\TT$ is surjective,
and hence $\TT$ is a surjective linear
isometry with $\TT=T$ on $\pcx$.

Suppose that $T'\colon\cx\to\cy$ is another surjective linear isometry that satisfies $T'=T$ on $\pcx$.
By the Banach--Stone theorem, there exist
a continuous function $\al\colon Y\to\set{-1, 1}$
and a homeomorphism $\varphi\colon Y\to X$
such that $T'(h)(y)=\al(y)h(\varphi(y))$ holds
for all $h\in\cx$ and $y\in Y$.
Having in mind that $T'(f)(y)=T(f)(y)=f(\phi(y))$ for  $f\in\pcx$ and $y\in Y$, 
we observe that $\al(y)=1$ and $\phi(y)=\varphi(y)$ for all $y\in Y$, and hence
$T'=\TT$ on $\cx$.
Thus, $\TT$ is the unique extension of $T$ to $\cx$.
\end{proof}

\begin{proof}[\textbf{Proof of Corollary~\ref{cor1}}]
This is a direct consequence of Corollary~\ref{cor3.2}
with Proposition~\ref{prop1.1}.
\end{proof}


\end{document}